\gdef\SetFigFont#1#2#3#4#5{%
  \reset@font\fontsize{#1}{#2pt}%
  \fontfamily{#3}\fontseries{#4}\fontshape{#5}%
  \selectfont}%
\newcommand{\E}{{\mathbb{\,E}}}
\newcommand{\I}{{\mathbb{\,I}}}
\newcommand{\ec}{{\mathcal{E}}}
\newcommand{\hc}{{\mathcal{H}}}
\newcommand{\pc}{{\mathcal{P}}}
\newcommand{\Var} {\operatorname{Var}}
\newcommand{\Cov} {\operatorname{Cov}}
\newcommand{\Prob} {\mathbb {P}}
\newcommand{\prob}[1]{\Prob\left(#1\right)}
\newcommand{\eps} {\varepsilon}
\newcommand{\hh} {\mathbb H}
\newcommand{\e}{\mathrm{e}}
\newcommand{\x}{\mathbf{x}}
\newcommand{\Y}{\mathbf{Y}}
\newcommand{\y}{\mathbf{y}}
\newcommand{\z}{\mathbf{z}}
\newcommand{\hcnd}{\hc^{(k)}(n,d)}
\newtheorem{theorem}{Theorem}
\newtheorem{proposition}[theorem]{Proposition}
\newtheorem*{conjecture*}{Conjecture}
\newtheorem*{proposition*}{Proposition}
\theoremstyle{remark}
\newtheorem{remark}{Remark}
\newtheorem*{remark*}{Remark}
  \title{ Approximate counting of regular hypergraphs}
  \author{{\large{Andrzej Dudek}}%
\footnote{\footnotesize {Department of Mathematics, Western Michigan University, Kalamazoo, MI, USA. Research supported in part by Simons Foundation Grant \#244712.}}
\and
{\large{Alan Frieze}}%
\footnote{\footnotesize {Department of Mathematical Sciences, Carnegie Mellon University, Pittsburgh, PA, USA. Research supported in part by NSF Grant CCF2013110.}}
\and
{\large{Andrzej Ruci\'nski}}%
\footnote{\footnotesize {Department of Discrete Mathematics, Adam Mickiewicz University, Pozna\'n, Poland. Research supported by the Polish NSC grant N201 604940 and the NSF grant DMS-1102086. Part of research performed at Emory University, Atlanta.}}\\[1pt]
\and
{\large{Matas \v Sileikis}}%
\footnote{\footnotesize {Department of Mathematics, Uppsala University, Sweden. Research supported by the Polish NSC grant N201 604940. Part of research performed at Adam Mickiewicz University, Pozna\'n.}}
}
\begin{document}
  \maketitle
  \begin{abstract} In this paper we asymptotically count $d$-regular $k$-uniform hypergraphs on $n$ vertices, provided $k$ is fixed and $d=d(n)=o(n^{1/2})$.
In doing so, we extend to hypergraphs a switching technique of McKay and Wormald.
  \end{abstract}
  \section{Introduction}
  We consider \emph{$k$-uniform hypergraphs} (or \emph{$k$-graphs}, for short) on the vertex set $V = [n] := \left\{ 1, \dots, n \right\}$. A $k$-graph $H=(V,E)$ is $d$-regular, if the degree of every vertex $v\in V$, $\deg_H(v) :=\deg(v):= |\left\{ e \in E : v \in e \right\}|$ equals $d$. 
  
  Let $\hcnd$ be the class of all $d$-regular $k$-graphs on $[n]$. Note that each $H\in\hcnd$ has $m:=nd/k$ edges (throughout, we implicitly assume that $k|nd$).
We treat $d$ as a function of $n$, possibly constant.

 A result of McKay \cite{McK} contains an asymptotic formula for the number of $n$-vertex $d$-regular graphs, when $d \le \eps n$ for any constant $\eps < 2/9$.
In this paper we present an asymptotic enumeration of all $d$-regular $k$-graphs on a given set of $n$ vertices, where $k \ge 3$ and $d=d(n)$ is either a constant or does not grow with $n$ too quickly. Let $\kappa=\kappa(k)=1$ for $k\ge4$ and $\kappa(3)= 1/2$.
\begin{theorem}
  \label{thm:enum}
  For every $k \ge3$, $1\le d =o(n^\kappa)$, and 
  \begin{equation*}
    |\hcnd| = \frac{(nd)!}{(nd/k)!(k!)^{nd/k}(d!)^n} \exp \left\{ -\frac12(k-1)(d-1) + O\left( (d/n)^{1/2} + d^2/n \right) \right\}.
  \end{equation*}
\end{theorem}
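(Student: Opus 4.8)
\emph{Setup via the pairing model.} The plan is to follow the pairing (configuration) model together with switchings, as pioneered by McKay and Wormald. Give each vertex $v\in[n]$ a set of $d$ \emph{points}, for $nd$ points in all, and call a \emph{configuration} any partition of these points into $m=nd/k$ unordered blocks of size $k$; there are $|\Omega|=(nd)!/\big((k!)^m m!\big)$ of them. Say a configuration is \emph{simple} if no block contains two points of the same vertex and no two blocks project onto the same $k$-subset of $[n]$. Every simple configuration projects to a member of $\hcnd$, and each $H\in\hcnd$ is the image of exactly $(d!)^n$ simple configurations (permute the $d$ points inside each vertex). Thus $|\hcnd|=|\Omega_{\mathrm{simple}}|/(d!)^n$, and since $|\Omega|/(d!)^n$ is exactly the leading fraction in the theorem, it suffices to show
\[
  p:=\prob{\text{the random configuration is simple}}=\exp\Big\{-\tfrac12(k-1)(d-1)+O\big((d/n)^{1/2}+d^2/n\big)\Big\}.
\]

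\emph{Identifying the defects.} The only obstructions to simplicity are \emph{loops} (a block with a repeated vertex) and \emph{doubles} (two blocks with the same vertex set). A first-moment computation gives the expected number of loops as $n\binom{d}{2}\cdot\frac{k-1}{nd-1}\sim\tfrac12(k-1)(d-1)=:\lambda$, precisely the constant in the exponent, whereas the expected number of doubles is only $O(d^2/n)$ when $k\ge3$ (and far smaller for $k\ge4$). This is exactly what separates the hypergraph case from graphs, where doubles also enter at leading order; here I expect doubles to contribute only to the error term, so the heart of the matter is to compute $\prob{\text{no loops}}$.

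\emph{The switching for loops.} Let $C(\ell)$ be the number of configurations with no doubles and exactly $\ell$ loops, so $p=C(0)/|\Omega|$ and $\sum_{\ell\ge0}C(\ell)=\prob{\text{no doubles}}\cdot|\Omega|=(1-O(d^2/n))|\Omega|$. Define a switching that deletes one loop: given a block containing two points $p_1,p_2$ of a common vertex $v$, evict one of them into another block $f$ and take a point of $f$ in return. Counting the valid forward switchings out of an $\ell$-loop configuration (about $2\ell\cdot mk=2\ell nd$, after discarding the $O(d/n)$ fraction that would create a new defect) and the reverse switchings into an $(\ell-1)$-loop configuration (about $n\,d(d-1)(k-1)$) gives
\[
  \frac{C(\ell)}{C(\ell-1)}=\frac{(k-1)(d-1)}{2\ell}\,(1+\text{error})=\frac{\lambda}{\ell}\,(1+\text{error}).
\]
Iterating, $C(\ell)\approx C(0)\lambda^\ell/\ell!$; summing and using $\sum_\ell C(\ell)=(1-O(d^2/n))|\Omega|$ yields $C(0)=e^{-\lambda}(1-O(d^2/n))|\Omega|(1+o(1))$, hence $p=e^{-\lambda+O(d^2/n)}(1+o(1))$, as required once the $o(1)$ is sharpened to $O((d/n)^{1/2})$.

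\emph{The main obstacle.} As always with this method, the real work is quantitative control of the ``error'' in the switching ratio. I must bound the forward and backward switchings spoilt by secondary defects---the evicted point landing at a vertex already in its new block, or the swap producing a repeated edge---each of which costs a relative factor $1+O(d/n)$, and this is where the hypothesis $d=o(n^{\kappa})$ guarantees enough room. The delicate point is to show that, once these corrections are summed over all $\ell$ up to $O(d)$ and combined with the double-edge estimate, the accumulated error telescopes to the claimed $O((d/n)^{1/2}+d^2/n)$ rather than something larger; this careful bookkeeping, not the bijective setup, is the crux.
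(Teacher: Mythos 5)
Your proposal follows essentially the same route as the paper: reduce the theorem to estimating the probability that a random configuration-type object is simple, show by a first-moment computation that multiple edges contribute only $O(d^2/n^{k-2})$ to the error, and remove loops one at a time by a switching whose forward/backward count ratio is $(k-1)(d-1)/(2\ell)$. The two structural differences --- the paper uses random permutations of the multiset $(1^d,\dots,n^d)$ instead of configurations, and its switching replaces a loop plus \emph{two} other edges by three new edges, while yours swaps a single point with \emph{one} bystander block --- are immaterial: your switching preserves degrees, is correctly reversible, and yields the same ratio with error terms of comparable order.

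The genuine gap is in your final summation, and it sits exactly where the $O(\sqrt{d/n})$ term of the theorem originates. You sum $C(\ell)\approx C(0)\lambda^\ell/\ell!$ over \emph{all} $\ell\ge 0$, but the switching estimate cannot hold uniformly in $\ell$: the relative error in the forward count is $O\left((\ell+d^2)/(nd)\right)$, not $O(d/n)$, because the bystander block must avoid the other $\ell-1$ loops as well as the vertices involved in the loop being destroyed and any choice creating a repeated edge; the ratio estimate is therefore vacuous once $\ell$ is comparable to $nd$. One must truncate at some threshold $L$ and control configurations with more than $L$ loops separately. Your remark that the corrections are summed ``over all $\ell$ up to $O(d)$'' cannot be completed with the tools you invoke: Markov's inequality at $L=\Theta(d)$ gives a tail bound of $\Theta(1)$, and you supply no concentration estimate for the number of loops. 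The paper chooses $L=\sqrt{nd}$, for which Markov gives tail probability $\E\lambda(\Y)/L=O(\sqrt{d/n})$, while the switching error accumulated over $\ell\le L$ is $\lambda\cdot O\left((L+d^2)/(nd)\right)=O(\sqrt{d/n}+d^2/n)$; these two sources balance and together produce the stated error term. (One also needs the Taylor-remainder bound $\sum_{\ell\le L}x^\ell/\ell!\ge \e^x\left(1-(\e x/L)^L\right)$, which is harmless for $L=\sqrt{nd}$ but not for $L=\Theta(d)$ combined with a failed tail estimate.) A second, smaller omission: your classes $C(\ell)$ include configurations with degenerate loops (a vertex of multiplicity at least $3$ in a block, or two distinct repeated vertices in one block), for which your forward count $2\ell\cdot nd$ and the claim that a switching lowers the loop count by exactly one both fail; the paper excludes these from $\ec$ at the outset at a first-moment cost of $O(d/n)$, and your argument needs the same preliminary step.
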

\noindent  The error term in the exponent tends to zero (thus giving the asymptotics of  $|\hcnd|$) if and only if $d = o(n^{1/2})$. Cf.\ an analogous formula for $k=2$ by McKay \cite{McK}, which gives the asymptotics if and only if $d = o(n^{1/3})$. Recently, 
Blinovsky and Greenhill~\cite{BG} obtained more general results counting sparse uniform hypergraphs with given degrees.

Theorem \ref{thm:enum} extends a result from \cite{CFMR} where Cooper, Frieze, Molloy and Reed proved  that formula for $d$ fixed using the by now standard \emph{configuration model} (see \cite{BC, bela, W} for the graph case). Already for graphs, in \cite{McK}, and later in \cite{MW} and \cite{MW2},
this technique was combined with the idea of \emph{switchings}, a sequence of operations on a graph which eliminate loops and multiple edges, while keeping the degrees unchanged and leading to an \emph{almost} uniform  distribution of the simple graphs obtained as the ultimate outcome (but see Remark \ref{alg} in Section \ref{cr}).

To prove Theorem \ref{thm:enum} we apply these ideas together with a modification from \cite{CFK}, where instead of configurations, permutations were used to generate graphs with a given degree sequence.
To describe this modification, consider a generalization of a $k$-graph in which edges are multisets of vertices rather than just sets.
By a \emph{$k$-multigraph} we mean a pair $H = (V,E)$ where $V$ is a set and $E$ is a multiset of $k$-element multisubsets of $V$. Thus we allow both multiple edges and loops, a \emph{loop} being an edge which contains more than one copy of a vertex. We call an edge \emph{proper} if it is not a loop. We say that a $k$-multigraph is \emph{simple} if it is a $k$-graph, that is, if it contains neither multiple edges nor loops. Henceforth, for brevity of notation, we denote an edge of a $k$-multigraph by $v_1\dots v_k$ rather than $\left\{ v_1, \dots, v_k \right\}$.

Given a sequence $\x\in[n]^{ks}$, $s\in{\mathbb N}$, let $H(\x)$ stand for the  $k$-multigraph with
edge multiset
$E = \{x_{ki+1},\dots,x_{ki+k} : i = 0,\dots, s-1\}$
and let $\lambda (\x)$ be the number of loops in $H(\x)$.

Let $\pc=\pc(n,d) \subset [n]^{nd}$ be the family of all permutations of the sequence 
  \begin{equation*}
    \label{eq:multiset}
    \left( \underbrace{1, \dots, 1}_d, \underbrace{2, \dots, 2}_d, \dots, \underbrace{n, \dots, n}_d \right).
  \end{equation*}
  Note that $|\pc|=(nd)!(d!)^{-n}$. Let $\Y = (Y_1, \dots, Y_{nd})$ be chosen uniformly at random from~$\pc$.

In the next section we sketch a proof of Theorem~\ref{thm:enum} together with some auxiliary results.

  \section{Proof of Theorem \ref{thm:enum}}\label{sec:counting}

  \subsection{Setup} 
 Let $\ec$ be the family of those permutations $\y\in\pc$ for which the $k$-multigraph $H(\y)$ has no multiple edges and contains at most
  $$L := \sqrt{nd}$$ loops, but no loops with less than $k-1$ distinct vertices.
  Let   \begin{equation*}
    \ec_l = \left\{ \y \in \ec : \lambda(\y) = l \right\}, \qquad l = 0, \dots, L.
  \end{equation*}
  Note that
  \begin{equation*}
    \ec_0 = \left\{ \y \in \pc : H(\y) \in \hcnd \right\}
  \end{equation*}
   is precisely the family of those permutations from $\pc$ which represent simple $k$-graphs. In turn, for each $H \in \hcnd$ there are $(nd/k)!(k!)^{nd/k}$ permutations $\y\in\ec_0$ with $H(\y)=H$. Therefore, in order to prove Theorem \ref{thm:enum}, it suffices to show that
  \begin{equation}\label{suff}
    |\pc|/|\ec_0| = \exp \left\{ \frac12(k-1)(d-1) + O(\sqrt{d/n} + d^2/n) \right\}.
  \end{equation}
  Our plan is as follows. First, in Proposition \ref{prop:YE}, we prove that
  \begin{equation}\label{eq:pcec}
    |\pc| \sim \left(1 + O\left( \sqrt{d/n} + d^2/n^{k-2}\right)\right)|\ec|.
  \end{equation}
   Note that for $d = o(n^\kappa)$, the error term in \eqref{eq:pcec} tends to zero and is at most the error term in \eqref{suff}. Thus, it is enough to show \eqref{suff} with $|\ec|$ in place of $|\pc|$, which we do by writing
  \begin{equation}\label{eq:sumprod}
    \frac{|\ec|}{|\ec_0|} = \sum_{l = 0}^L \prod_{i = 1}^l \frac{|\ec_i|}{|\ec_{i-1}|},
  \end{equation}
and estimating the ratio $|\ec_l|/|\ec_{l-1}|$ uniformly for every $1 \le l \le L$.

 In what follows it will be convenient to work directly with permutation $\Y$ rather than with the $k$-multigraph $H(\Y)$ generated by it. Recycling the notation, we still call consecutive $k$-tuples $(Y_{ki+1},\dots,Y_{ki+k})$ of $\Y$ \emph{edges}, \emph{proper edges}, or \emph{loops}, whatever appropriate. E.g., we say that $\Y$ contains \emph{multiple edges}, if $H(\Y)$ contains multiple edges, that is, some two edges of $\Y$ are identical as multisets.
We use the standard notation $(x)_a=x(x-1)\cdots(x-a+1)$.

The following proposition implies \eqref{eq:pcec}, because $ \prob{\Y \in \ec} = {|\ec|}/{|\pc|}$.
  \begin{proposition}
    \label{prop:YE}
    If $k\ge3$, then $ \prob{\Y \in \ec} = 1-O(\sqrt{d/n} + d^2/n^{k-2})$.
    \end{proposition}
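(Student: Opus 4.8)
The plan is to bound the complementary probability $\prob{\Y\notin\ec}$ by a first moment argument. By the definition of $\ec$, the event $\Y\notin\ec$ occurs precisely when at least one of three events happens: (i) $H(\Y)$ contains two identical edges (a multiple edge); (ii) $H(\Y)$ has more than $L=\sqrt{nd}$ loops; (iii) $H(\Y)$ has a loop with fewer than $k-1$ distinct vertices (a \emph{bad} loop). By the union bound it suffices to show that each of these has probability $O(\sqrt{d/n}+d^2/n^{k-2})$. The only computational tool I need is the elementary identity that for a uniform $\Y\in\pc$ the probability that $t$ prescribed positions receive prescribed values equals $\left(\prod_v (d)_{c_v}\right)/(nd)_t$, where $c_v$ is the multiplicity of the value $v$ among those values; in particular $\prob{Y_i=Y_j}=(d-1)/(nd-1)$ for $i\neq j$.

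For event (ii), I note that the number of loops $\lambda(\Y)$ is at most the number of coincident pairs of positions $\{i,j\}$ lying in a common edge. Summing the pairwise coincidence probability over all $m\binom{k}{2}$ such pairs gives $\E\,\lambda(\Y)\le m\binom{k}{2}\frac{d-1}{nd-1}=\tfrac12(k-1)(d-1)(1+o(1))=O(d)$, whence Markov's inequality yields $\prob{\lambda(\Y)>L}\le \E\,\lambda(\Y)/L=O(d/\sqrt{nd})=O(\sqrt{d/n})$. For event (iii), a bad loop forces either three coincident positions inside one edge or two disjoint coincident pairs inside one edge; by the counting identity each such pattern on a fixed position set has probability $O(1/n^2)$, and since there are $O(nd)$ edges carrying $O(1)$ patterns each, the expected number of bad loops is $O(nd/n^2)=O(d/n)$. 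As $d\le n$ gives $d/n\le\sqrt{d/n}$, the first moment method bounds the probability of (iii) by $O(\sqrt{d/n})$.

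Event (i) is the main obstacle and the step that forces $k\ge3$. I estimate the expected number of unordered pairs of identical edges, whose dominant contribution comes from two proper edges equal as $k$-element sets. For a fixed $k$-set of vertices the counting identity gives probability $(k!)^2[d(d-1)]^k/(nd)_{2k}$; summing over the $\binom{n}{k}$ sets and the $\binom{m}{2}$ pairs of blocks yields $\binom{m}{2}(n)_k\,k!\,[d(d-1)]^k/(nd)_{2k}=O(d^2/n^{k-2})$. The remaining cases, in which at least one of the two equal edges is a loop, are smaller by a factor $\Theta(1/n)$: each repeated vertex reduces the number of free vertex choices by a factor of $n$ while leaving the factorial-moment factor of order $d^{2k}$ unchanged, so they contribute $O(d^2/n^{k-1})$ and are absorbed. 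Markov's inequality then bounds the probability of event (i) by $O(d^2/n^{k-2})$.

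Combining the three estimates via the union bound gives $\prob{\Y\notin\ec}=O(\sqrt{d/n}+d^2/n^{k-2})$, which is the assertion. The only genuinely delicate point is tracking the exponent of $n$ in event (i): it is exactly here that the hypothesis $k\ge3$ is used, since for $k=2$ the identical computation produces a term of order $d^2$ that does not vanish, which is why the graph case requires a different and more demanding analysis.
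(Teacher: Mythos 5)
Your proof is correct and follows essentially the same route as the paper: a first-moment/union-bound argument over the same three bad events (multiple edges, more than $L$ loops, loops with fewer than $k-1$ distinct vertices), with the same permutation-counting identity $\prod_v (d)_{c_v}/(nd)_t$ underlying every estimate and the same resulting bounds $O(d^2/n^{k-2})$, $O(\sqrt{d/n})$, and $O(d/n)$. The only cosmetic differences are that you bound $\E\lambda(\Y)$ by counting coincident position pairs rather than computing it asymptotically, and you split the multiple-edge count into proper-edge and loop cases where the paper uses a single multinomial sum; neither affects the argument.
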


\noindent    
A simple proof of Proposition \ref{prop:YE} (details can be found in Appendix~\ref{appendix:a}) is based on the first moment method. In particular, the expected numbers of pairs of multiple edges, loops with less than $k-1$ distinct vertices, and all loops are, respectively, $O(d^2/n^{k-2})$, $O(d/n)$, and $\E\lambda(\Y)\sim\frac {k-1} 2(d-1).$ The last formula implies that $\Prob(\lambda(\Y)>L) \le \frac{\E\lambda(\Y)}L = O(\sqrt{d/n}).$

% Magnification in xFig: 60%
% Opacity in Inkscape: 70%
\begin{figure}[h]
\centering
        \subfigure[]%{\input{fig/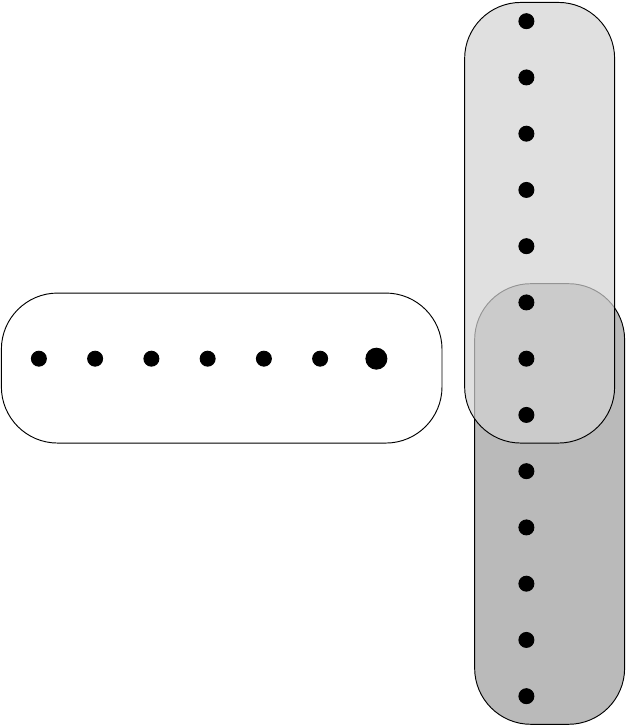_t}}
{%
\label{fig:a}
\begin{picture}(0,0)%
\includegraphics{switching1.pdf}%
\end{picture}%
\setlength{\unitlength}{2368sp}%
\begin{picture}(5127,5799)(5389,-15598)
\put(9751,-12736){\makebox(0,0)[lb]{\smash{{\SetFigFont{7}{8.4}{\rmdefault}{\mddefault}{\updefault}{\color[rgb]{0,0,0}$w_2$}%
}}}}
\put(9751,-11836){\makebox(0,0)[lb]{\smash{{\SetFigFont{7}{8.4}{\rmdefault}{\mddefault}{\updefault}{\color[rgb]{0,0,0}$y_1$}%
}}}}
\put(9751,-11386){\makebox(0,0)[lb]{\smash{{\SetFigFont{7}{8.4}{\rmdefault}{\mddefault}{\updefault}{\color[rgb]{0,0,0}$y_2$}%
}}}}
\put(9751,-12286){\makebox(0,0)[lb]{\smash{{\SetFigFont{7}{8.4}{\rmdefault}{\mddefault}{\updefault}{\color[rgb]{0,0,0}$w_s$}%
}}}}
\put(9751,-10936){\makebox(0,0)[lb]{\smash{{\SetFigFont{7}{8.4}{\rmdefault}{\mddefault}{\updefault}{\color[rgb]{0,0,0}$y_3$}%
}}}}
\put(9751,-10486){\makebox(0,0)[lb]{\smash{{\SetFigFont{7}{8.4}{\rmdefault}{\mddefault}{\updefault}{\color[rgb]{0,0,0}$y_4$}%
}}}}
\put(9751,-13186){\makebox(0,0)[lb]{\smash{{\SetFigFont{7}{8.4}{\rmdefault}{\mddefault}{\updefault}{\color[rgb]{0,0,0}$w_1$}%
}}}}
\put(9751,-10036){\makebox(0,0)[lb]{\smash{{\SetFigFont{7}{8.4}{\rmdefault}{\mddefault}{\updefault}{\color[rgb]{0,0,0}$y_{k-s}$}%
}}}}
\put(9751,-13636){\makebox(0,0)[lb]{\smash{{\SetFigFont{7}{8.4}{\rmdefault}{\mddefault}{\updefault}{\color[rgb]{0,0,0}$z_1$}%
}}}}
\put(9751,-14086){\makebox(0,0)[lb]{\smash{{\SetFigFont{7}{8.4}{\rmdefault}{\mddefault}{\updefault}{\color[rgb]{0,0,0}$z_2$}%
}}}}
\put(9751,-14536){\makebox(0,0)[lb]{\smash{{\SetFigFont{7}{8.4}{\rmdefault}{\mddefault}{\updefault}{\color[rgb]{0,0,0}$z_3$}%
}}}}
\put(9751,-14986){\makebox(0,0)[lb]{\smash{{\SetFigFont{7}{8.4}{\rmdefault}{\mddefault}{\updefault}{\color[rgb]{0,0,0}$z_4$}%
}}}}
\put(9751,-15436){\makebox(0,0)[lb]{\smash{{\SetFigFont{7}{8.4}{\rmdefault}{\mddefault}{\updefault}{\color[rgb]{0,0,0}$z_{k-s}$}%
}}}}
\put(10426,-11236){\makebox(0,0)[lb]{\smash{{\SetFigFont{10}{12.0}{\rmdefault}{\mddefault}{\updefault}{\color[rgb]{0,0,0}$e_1$}%
}}}}
\put(10501,-14386){\makebox(0,0)[lb]{\smash{{\SetFigFont{10}{12.0}{\rmdefault}{\mddefault}{\updefault}{\color[rgb]{0,0,0}$e_2$}%
}}}}
\put(7126,-13636){\makebox(0,0)[lb]{\smash{{\SetFigFont{10}{12.0}{\rmdefault}{\mddefault}{\updefault}{\color[rgb]{0,0,0}$f$}%
}}}}
\put(8259,-12966){\makebox(0,0)[lb]{\smash{{\SetFigFont{8}{9.6}{\rmdefault}{\mddefault}{\updefault}{\color[rgb]{0,0,0}$v\ \!v$}%
}}}}
\put(6076,-12961){\makebox(0,0)[lb]{\smash{{\SetFigFont{7}{8.4}{\rmdefault}{\mddefault}{\updefault}{\color[rgb]{0,0,0}$x_5$}%
}}}}
\put(6526,-12961){\makebox(0,0)[lb]{\smash{{\SetFigFont{7}{8.4}{\rmdefault}{\mddefault}{\updefault}{\color[rgb]{0,0,0}$x_4$}%
}}}}
\put(6976,-12961){\makebox(0,0)[lb]{\smash{{\SetFigFont{7}{8.4}{\rmdefault}{\mddefault}{\updefault}{\color[rgb]{0,0,0}$x_3$}%
}}}}
\put(7426,-12961){\makebox(0,0)[lb]{\smash{{\SetFigFont{7}{8.4}{\rmdefault}{\mddefault}{\updefault}{\color[rgb]{0,0,0}$x_2$}%
}}}}
\put(7876,-12961){\makebox(0,0)[lb]{\smash{{\SetFigFont{7}{8.4}{\rmdefault}{\mddefault}{\updefault}{\color[rgb]{0,0,0}$x_1$}%
}}}}
\put(5502,-12957){\makebox(0,0)[lb]{\smash{{\SetFigFont{7}{8.4}{\rmdefault}{\mddefault}{\updefault}{\color[rgb]{0,0,0}$x_{k-2}$}%
}}}}
\end{picture}%
}%
        \qquad
        \subfigure[]%{\input{fig/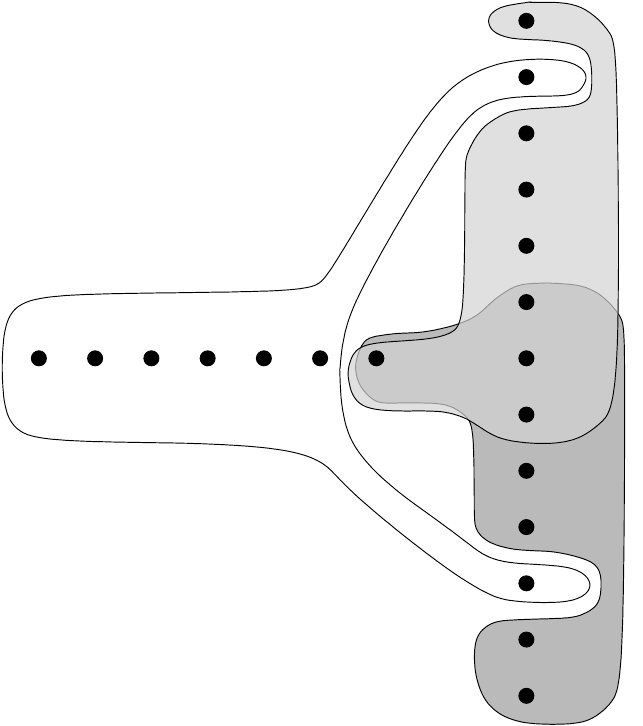_t}}
{%
\label{fig:b}
\begin{picture}(0,0)%
\includegraphics{switching2.pdf}%
\end{picture}%
\setlength{\unitlength}{2368sp}%
\begin{picture}(5127,5804)(5389,-15601)
\put(9751,-12736){\makebox(0,0)[lb]{\smash{{\SetFigFont{7}{8.4}{\rmdefault}{\mddefault}{\updefault}{\color[rgb]{0,0,0}$w_2$}%
}}}}
\put(9751,-11836){\makebox(0,0)[lb]{\smash{{\SetFigFont{7}{8.4}{\rmdefault}{\mddefault}{\updefault}{\color[rgb]{0,0,0}$y_1$}%
}}}}
\put(9751,-11386){\makebox(0,0)[lb]{\smash{{\SetFigFont{7}{8.4}{\rmdefault}{\mddefault}{\updefault}{\color[rgb]{0,0,0}$y_2$}%
}}}}
\put(9751,-12286){\makebox(0,0)[lb]{\smash{{\SetFigFont{7}{8.4}{\rmdefault}{\mddefault}{\updefault}{\color[rgb]{0,0,0}$w_s$}%
}}}}
\put(9751,-10936){\makebox(0,0)[lb]{\smash{{\SetFigFont{7}{8.4}{\rmdefault}{\mddefault}{\updefault}{\color[rgb]{0,0,0}$y_3$}%
}}}}
\put(9751,-13186){\makebox(0,0)[lb]{\smash{{\SetFigFont{7}{8.4}{\rmdefault}{\mddefault}{\updefault}{\color[rgb]{0,0,0}$w_1$}%
}}}}
\put(9751,-10036){\makebox(0,0)[lb]{\smash{{\SetFigFont{7}{8.4}{\rmdefault}{\mddefault}{\updefault}{\color[rgb]{0,0,0}$y_{k-s}$}%
}}}}
\put(9751,-13636){\makebox(0,0)[lb]{\smash{{\SetFigFont{7}{8.4}{\rmdefault}{\mddefault}{\updefault}{\color[rgb]{0,0,0}$z_1$}%
}}}}
\put(9751,-14086){\makebox(0,0)[lb]{\smash{{\SetFigFont{7}{8.4}{\rmdefault}{\mddefault}{\updefault}{\color[rgb]{0,0,0}$z_2$}%
}}}}
\put(9751,-14986){\makebox(0,0)[lb]{\smash{{\SetFigFont{7}{8.4}{\rmdefault}{\mddefault}{\updefault}{\color[rgb]{0,0,0}$z_4$}%
}}}}
\put(9751,-15436){\makebox(0,0)[lb]{\smash{{\SetFigFont{7}{8.4}{\rmdefault}{\mddefault}{\updefault}{\color[rgb]{0,0,0}$z_{k-s}$}%
}}}}
\put(9751,-10486){\makebox(0,0)[lb]{\smash{{\SetFigFont{8}{9.6}{\rmdefault}{\mddefault}{\updefault}{\color[rgb]{0,0,0}$y_*$}%
}}}}
\put(9751,-14536){\makebox(0,0)[lb]{\smash{{\SetFigFont{8}{9.6}{\rmdefault}{\mddefault}{\updefault}{\color[rgb]{0,0,0}$z_*$}%
}}}}
\put(8361,-12961){\makebox(0,0)[lb]{\smash{{\SetFigFont{8}{9.6}{\rmdefault}{\mddefault}{\updefault}{\color[rgb]{0,0,0}$v$}%
}}}}
\put(10501,-14386){\makebox(0,0)[lb]{\smash{{\SetFigFont{10}{12.0}{\rmdefault}{\mddefault}{\updefault}{\color[rgb]{0,0,0}$e_2^{'}$}%
}}}}
\put(10426,-11236){\makebox(0,0)[lb]{\smash{{\SetFigFont{10}{12.0}{\rmdefault}{\mddefault}{\updefault}{\color[rgb]{0,0,0}$e_1^{'}$}%
}}}}
\put(7122,-13658){\makebox(0,0)[lb]{\smash{{\SetFigFont{10}{12.0}{\rmdefault}{\mddefault}{\updefault}{\color[rgb]{0,0,0}$e_3^{'}$}%
}}}}
\put(6076,-12961){\makebox(0,0)[lb]{\smash{{\SetFigFont{7}{8.4}{\rmdefault}{\mddefault}{\updefault}{\color[rgb]{0,0,0}$x_5$}%
}}}}
\put(6526,-12961){\makebox(0,0)[lb]{\smash{{\SetFigFont{7}{8.4}{\rmdefault}{\mddefault}{\updefault}{\color[rgb]{0,0,0}$x_4$}%
}}}}
\put(6976,-12961){\makebox(0,0)[lb]{\smash{{\SetFigFont{7}{8.4}{\rmdefault}{\mddefault}{\updefault}{\color[rgb]{0,0,0}$x_3$}%
}}}}
\put(7426,-12961){\makebox(0,0)[lb]{\smash{{\SetFigFont{7}{8.4}{\rmdefault}{\mddefault}{\updefault}{\color[rgb]{0,0,0}$x_2$}%
}}}}
\put(7876,-12961){\makebox(0,0)[lb]{\smash{{\SetFigFont{7}{8.4}{\rmdefault}{\mddefault}{\updefault}{\color[rgb]{0,0,0}$x_1$}%
}}}}
\put(5520,-12961){\makebox(0,0)[lb]{\smash{{\SetFigFont{7}{8.4}{\rmdefault}{\mddefault}{\updefault}{\color[rgb]{0,0,0}$x_{k-2}$}%
}}}}
\end{picture}%
}%
        \caption{Switching (a) before and (b) after.}
        \label{fig:1}
\end{figure}

\subsection{Switchings}
Now we define an operation, called \emph{switching}, which generalizes  to $k$-graphs a graph switching introduced in \cite{MW} (see also \cite{MW2}).
    Permutations $\y \in \ec_l$, $\z \in \ec_{l-1}$ are said to be \emph{switchable}, if $\z$ can be obtained from $\y$ by the following operation. From the edges of $\y$, choose a loop~$f$ and two proper edges $e_1, e_2$ that are disjoint from $f$ and share at most $k-2$ vertices (see Figure~\ref{fig:a}). Letting $s = |e_1 \cap e_2|$, write
    \begin{equation*}\label{eq:fee}
    f = vvx_1\dots x_{k-2}, \qquad e_1 = w_1\dots w_s y_1\dots y_{k-s}, \qquad e_2 = w_1\dots w_s z_1 \dots z_{k-s}.
  \end{equation*}
  Select vertices $y_* \in \left\{ y_1, \dots, y_{k-s} \right\}$ and $z_* \in \{z_1, \dots, z_{k-s}\}$, and replace $f, e_1$, and $e_2$ by three proper edges
  \begin{equation*}
    e'_1 = e_1 \cup \{v\} - \{y_*\}, \qquad e'_2 = e_2 \cup \{v\} - \{z_*\}, \qquad e'_3 = f \cup \{y_*,z_*\}- \{v,v\}
  \end{equation*}
as in Figure~\ref{fig:b}.  
Since we are dealing with permutations, for definiteness let us assume that the procedure is performed by swapping with $y_*$ the copy of $v$ which appears in $\y$ further to the left and with $z_*$ the one further to the right.

  We can reconstruct permutations in $\ec_{l+1}$ which are switchable with $\y$ as follows. Pick a vertex $v \in [n]$, two edges $e_1'$, $e_2'$ containing $v$, and one more edge $e_3'$ (consult with Figure~\ref{fig:1} again). Choose a pair $\{y_*, z_*\}$  of vertices from $e_3'$; replace $e_i'$, $i=1,2,3$, by a loop and two edges defined as
 \begin{equation*}
   f = e_3' \cup \left\{ v, v \right\} \setminus \left\{ y_*, z_* \right\}, \qquad e_1 = e_1' \cup \left\{ y_* \right\} \setminus \left\{ v \right\}, \qquad  e_2 = e_2' \cup \left\{ z_* \right\} \setminus \left\{ v \right\}.
 \end{equation*}
\noindent Given $\y \in \ec_l$, let $F(\y)$ and $B(\y)$ stand, respectively, for the number of ways to perform the forward and backward switching,  or, in other words, the number of
  permutations $\x \in \ec_{l-1}$ and $\z \in \ec_{l+1}$ which are switchable with $\y$. Recall that $L=\sqrt{nd}$ and set $F_l=d^2n^2l$, $l=1,\dots,L$, and $ B =\tfrac{k-1}{2} n^2d^2 (d-1)$.
  \begin{proposition}
    \label{prop:regu}
   There is a sequence $\delta = \delta(n) = O((L+d^2)/dn)$  such that for all $\y \in \ec_l$, $0 < l \le L$
  \begin{equation*}
    (1-\delta) F_l\le F(\y) \le F_l \quad\mbox{ and }\quad  (1-\delta) B \le B(\y) \le B .
  \end{equation*}
  \end{proposition}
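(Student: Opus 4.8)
The plan is to obtain each of the four bounds by first writing down a clean product formula that counts a superset of the legal switchings, and then showing that the illegal ones form only an $O(\delta)$ fraction. For the forward count I regard a switching of $\y\in\ec_l$ as encoded by a triple $(f,q_1,q_2)$, where $f$ is one of the $l$ loops of $\y$ and $q_1,q_2\in[nd]$ are the positions holding the vertices $y_*,z_*$ that get swapped with the two copies of the repeated vertex $v$ of $f$. One checks that the forward operation is injective on legal triples and that every $\x\in\ec_{l-1}$ switchable with $\y$ arises from exactly one triple, so that $F(\y)$ equals the number of legal triples. Since there are $l$ loops and at most $nd$ choices for each of $q_1,q_2$, this gives the upper bound $F(\y)\le l(nd)^2=F_l$ at once.

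For the lower bound I subtract from $F_l$ the triples that fail one of the requirements, namely those for which: (i) $q_1$ or $q_2$ lies in $f$; (ii) $q_1$ and $q_2$ lie in the same edge; (iii) the edge containing $q_1$ or $q_2$ is itself a loop; (iv) the edges $e_1,e_2$ containing $q_1,q_2$ share $k-1$ vertices (so $y_*$ or $z_*$ would be a shared vertex); or (v) the resulting permutation leaves $\ec_{l-1}$ because one of the new edges $e_1',e_2',e_3'$ coincides with an existing edge. Fixing the loop ($l$ ways) and bounding the remaining freedom using only $k$-uniformity, the maximum degree $d$, and the defining properties of $\ec$ (no multiple edges, at most $L$ loops, no loop on fewer than $k-1$ vertices), families (i)--(iii) contribute $O(l\cdot Lk\cdot nd)$ and family (iv) contributes $O(l\cdot nd^2)$; relative to $F_l=ln^2d^2$ these are $O(L/(nd))$ and $O(1/n)$, both absorbed into $\delta$.

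For the backward count I encode a switching of $\y\in\ec_l$ by $(v,e_1',e_2',e_3',\{y_*,z_*\})$: a vertex $v$ ($n$ choices), an ordered pair of distinct edges through $v$ (at most $d(d-1)$, since $v$ has simple-degree at most $d$), a further edge $e_3'$ (at most $m=nd/k$), and an unordered pair inside $e_3'$ (at most $\binom{k}{2}$). Their product is exactly $n\cdot d(d-1)\cdot(nd/k)\cdot\binom{k}{2}=B$, giving $B(\y)\le B$. The lower bound subtracts the choices with $v$ on a loop of $\y$ (so its simple-degree is $<d$), with $e_3'\in\{e_1',e_2'\}$ or $v\in e_3'$ or $e_3'$ a loop, and with one of the reconstructed edges $f,e_1,e_2$ duplicating an existing edge; as before each family is $O(\delta)$ relative to $B$.

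The one genuinely delicate point is the multiple-edge family (v) in the forward direction and its analogue in the backward direction, which are exactly what force the $d^2/(nd)=d/n$ summand of $\delta$. The engine here is a codegree estimate valid for every $\y\in\ec$: the number of ordered pairs of edges meeting in exactly $k-1$ vertices is at most $(d-1)\cdot nd=O(nd^2)$, since writing $\deg_S$ for the number of edges containing a $(k-1)$-set $S$ one has $\sum_S\deg_S=nd$ and $\deg_S\le d$, whence $\sum_S\deg_S(\deg_S-1)\le(d-1)nd$. Using this after fixing the loop and its vertex $v$, the number of bad pairs $(e_1,y_*)$ with $e_1'=e_1-y_*+v$ already an edge is at most $\sum_{g\ni v}(\deg_{g-v}-1)\le d(d-1)=O(d^2)$, while the remaining choice of $(e_2,z_*)$ costs at most $nd$; this yields $O(l\cdot d^2\cdot nd)=O((d/n)F_l)$, and the backward analogue is identical. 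Collecting families (i)--(v) and taking the estimates uniformly over $0<l\le L$ produces the asserted $\delta=O((L+d^2)/(nd))$.
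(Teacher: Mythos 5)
Your overall scheme---product-formula upper bounds, subtraction of rare bad configurations for the lower bounds, and a codegree estimate to control duplicate edges---is the same double-counting strategy as the paper's, and your codegree computation for family (v) is correct and yields the right $d/n$ term. The genuine problem is in your backward lower bound, in the family ``choices with $v$ on a loop of $\y$.'' That family is \emph{not} $O(\delta B)$: up to $(k-1)l$ vertices lie on loops, and each such vertex carries up to $(d)_2\, m \binom{k}{2} = B/n$ tuples, so the family can have size of order $(l/n)B$; for $l$ comparable to $L=\sqrt{nd}$ this is of order $\sqrt{d/n}\,B$, which exceeds $\delta B$ with $\delta = O\bigl((L+d^2)/(dn)\bigr)$ by an unbounded factor whenever $d\to\infty$. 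Subtracting it wholesale would only prove the proposition with $\delta = O(\sqrt{d/n}+d/n)$, which propagates into Theorem~\ref{thm:enum} as an error term $O(\sqrt{d^3/n}+d^2/n)$, i.e.\ an asymptotic formula only for $d=o(n^{1/3})$. The subtraction is also unnecessary: most tuples anchored at a loop-vertex are perfectly valid switchings. What you actually need is that the \emph{shortfall} of $\sum_v \#\{\text{ordered pairs of distinct edges through } v\}$ below $n(d)_2$ is small: by the definition of $\ec$ every loop has exactly one repeated vertex, so a vertex that is repeated in $j$ loops lies in $d-j$ distinct edges, whence the shortfall is $\sum_v \bigl[(d)_2-(d-j(v))_2\bigr]\le 2ld$, contributing $O(lnd^2)=O\bigl((l/(nd))\,B\bigr)$, as required. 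Alternatively (the paper's route), keep all $n(d)_2$ nominal pairs and subtract only the tuples in which $e_1'$ or $e_2'$ is itself a loop, at most $2kldm\binom{k}{2}=O\bigl((l/(nd))\,B\bigr)$ of them.

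A second, smaller defect is that your lists of illegal configurations are incomplete, so the subtractions as written do not yet justify the lower bounds. In the forward direction you never exclude triples in which $e_1$ or $e_2$ merely intersects $f$ without equaling it: in particular, if $e_1$ contains the repeated vertex $v$, then $e_1'=e_1\cup\{v\}-\{y_*\}$ is itself a loop, so the result has $l$ loops and lies outside $\ec_{l-1}$. Likewise, triples with $y_*\in e_1\cap e_2$ or $z_*\in e_1\cap e_2$ when $1\le |e_1\cap e_2|\le k-2$ violate the requirement $y_*\in e_1\setminus e_2$, $z_*\in e_2\setminus e_1$, yet your family (iv) covers only $|e_1\cap e_2|=k-1$. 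In the backward direction, tuples in which $e_3'$ meets $e_1'\cup e_2'$ at a vertex other than $y_*,z_*$ (you forbid only $v\in e_3'$ and $e_3'\in\{e_1',e_2'\}$) produce an $e_1$ or $e_2$ intersecting the new loop $f$, so the forward switching cannot return to $\y$ and these tuples are also illegal. All of these omitted families have size $O(F_l/n)$, resp.\ $O(B/n)$, hence are absorbed into $\delta$; but they must be enumerated---or, as the paper does, eliminated at a stroke by restricting to edges at distance at least three from the loop in the intersection graph, a device which simultaneously disposes of your duplicate-edge family---before the claimed lower bounds follow.
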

\begin{proof}
 Clearly 
      $F(\y) \le lm^2k^2 = n^2d^2l.$
 We say that two edges $e', e''$ of a $k$-graph are \emph{distant} from each other if their distance in the intersection graph of $H(\y)$ is at least three.
  Note that given $f, e_1,$ and $e_2$, some choice of $y_*$ and $z_*$ might not yield a permutation $\z \in \ec_{l-1}$, because one or more of $e_i'$'s might already be present in $\y$.  However, all $k^2$ choices of $(y_*, z_*)$ are allowed, if $e_1 \cap e_2 = \emptyset$ and both $e_1$ and $e_2$ are distant from $f$.
 Therefore, \begin{equation*}
   F(\y) \ge k^2 (m - l - 2k^2d^2)^2l = k^2m^2l(1 - O((L+d^2)/m)).
 \end{equation*}
 Clearly $B(\y) \le n(d)_2 m \binom k 2 = B$. To bound $B(\y)$ from below, we estimate the number of choices of $(v, e_1', e_2', e_3')$, for which at least one pair $\{y_*, z_*\}$ does not yield a permutation in $\ec_{l+1}$. This can only happen when one of $e_1', e_2', e_3'$ is a loop, which occurs for at most $2kldm + ln(d)_2$ choices, or when $e_3'$ is not distant from both $e_1'$ and $e_2'$, which occurs for at most $n(d)_2 \cdot 2k^2d^2$ choices. We have $B = \Theta(n^2d^3)$, therefore
\begin{equation*}
  B(\y) \ge B - \binom k 2 \left( 2kldm + ln(d)_2 + 2k^2nd^4 \right) = B\left(1 - O\left( \frac{L + d^2}{nd} \right)\right).
 \end{equation*}
\end{proof}

\begin{proof}[Proof of Theorem \ref{thm:enum}]
  Counting the switchable pairs $\y \in \ec_l$, $\z \in \ec_{l-1}$ in two ways, from Proposition \ref{prop:regu} we conclude that
  \begin{equation}\label{eq:ratio}
    \frac{(1-\delta)  B }{ F_l} \le \frac{|\ec_l|}{|\ec_{l-1}|} \le  \frac{ B }{(1-\delta) F_l}.
  \end{equation}
  Since $B / F_l=(k-1)(d-1)/2l$, from \eqref{eq:sumprod} and \eqref{eq:ratio} we get 
  \begin{equation*}
    \sum_{l = 0}^L \frac{x^l}{l!} \le \frac{|\ec|}{|\ec_0|} \le \sum_{l = 0}^L \frac{y^l}{l!}
  \end{equation*}
  where $x = \frac12(1-\delta)(k-1)(d-1)$ and $y = \frac12(k-1)(d-1)/(1-\delta)$.
  Therefore by Taylor's theorem $|\ec|/|\ec_0|$ is at most $\e^y$ and at least
  \begin{equation*}
    \e^x(1-x^L/L!) \ge \e^x (1 - (\e x/L)^L) = \exp \left\{ x - o \left( \sqrt{d/n} \right) \right\},
  \end{equation*}
  the inequality following from a standard fact $L! \ge (L/\e)^L$.
  Since $x, y = (k-1)(d-1)/2 + O(\sqrt{d/n} + d^2/n)$, we get 
  $$\frac{|\ec|}{|\ec_0|} =\exp\left\{\frac12(k-1)(d-1)+ O(\sqrt{d/n} + d^2/n)\right\}$$
  which together with (\ref{eq:pcec}) implies (\ref{suff}), hereby completing the proof.
\end{proof}

\section{Concluding remarks}\label{cr}

\begin{remark}  We believe that for $k=3$ the constraint $d=o(n^{1/2})$ in Theorem \ref{thm:enum} can be relaxed to $d=o(n)$ by allowing $O(d^2/n)$ multiple edges in $\y\in\ec$ and applying an appropriate  switching technique to eliminate them along with the loops.
\end{remark}

\begin{remark}
In  a forthcoming paper \cite{loose} we apply the switching technique presented here to embed asymptotically almost surely (\emph{a.a.s.}) an ordinary Erd\H os-R\'enyi random $k$-graph 
$\hh^{(k)}(n,m')$, $k\ge3$, into a random $d$-regular $k$-graph $\hh^{(k)}(n,d)$ for $d=\Omega(\log n)$,  $d=o(\sqrt n)$ and $m'=cnd/k$, for some constant $c>0$. Consequently, \emph{a.a.s.} $\hh^{(k)}(n,d)$ inherits from $\hh^{(k)}(n,m')$ all increasing properties held by the latter model.
\end{remark}

\begin{remark}\label{alg} An algorithm of McKay and Wormald~\cite{MW} can be easily adapted to $k$-graphs, yielding an expected polynomial time uniform generation of $d$-regular $k$-graphs in $\hcnd$.
The algorithm keeps selecting  a random permutation $\y\in \pc$ until $\y\in \ec$. Then, iteratively, a random switching  is applied $\lambda(\y)$  times to eliminate all loops and finally yield a random element of $\ec_0$. This  leads to an \emph{almost} uniform distribution over $\hcnd$. To make it \emph{exactly} uniform, McKay and Wormald applied an ingenious trick of restarting the whole algorithm after every iteration of switching, say from $\y\in\ec_l$ to $\z\in\ec_{l-1}$, with probability $1-( F(\y)(1-\delta_1) B )/( B(\z) F_l)\le2\delta_1$.
However, the assumption on $d$ has to be strengthened, so that the reciprocal of the probability of not restarting the algorithm before its successful termination, or $(1-\phi_k(n))^{-1}(1-2\delta_1(n))^{-L}=e^{O(\delta_1(n)L)}$, is at most a polynomial function of $n$. With our choice of $L$ this imposes the bound $d=O(n^{1/3}(\log n)^{2/3})$. We may push it up to $d=O(\sqrt{n\log n})$ by redefining $L=kd+\omega(n)$ for any (sufficiently slow) sequence $\omega(n)\to\infty$. This change requires that in the last part of the proof of Proposition~\ref{prop:YE}, instead of the first moment, Chebyshev's inequality is used (see Appendix~\ref{appendix:a}).

\end{remark}

%\end{document}

%%% ONLY FOR ARXIV VERSION %%%

\newpage

\appendix
\section{Appendix}\label{appendix:a}
\begin{proof}[ Proof of Proposition \ref{prop:YE}]

  We will show that each of the following four statements holds with probability $1-O(\sqrt{d/n} + d^2/n^{k-2})$:
    \begin{itemize}
      \item[(i)] $\Y$ has no multiple edges,
      \item[(ii)] $\Y$ has no edge with a vertex of multiplicity at least 3,
      \item[(iii)] $\Y$ has no edge with two vertices of multiplicity at least 2,
      \item[(iv)] $\lambda(\Y) \le L$.
    \end{itemize}
    
\noindent    
(i) The probability that two particular edges of $\Y$ are identical as multisets equals
$$
\sum_{k_1+\cdots+k_n=k}\binom{k}{k_1,\dots,k_n}^2\frac{\binom{dn-2k}{d-2k_1,\dots,d-2k_n}}{\binom{dn}{d,\dots,d}}\le k!^2\sum\frac{d^{2k}}{(dn)_{2k}} = O\left(n^k\frac{d^{2k}}{(dn)_{2k}}\right)=O(n^{-k}),
$$
therefore, by our assumption on $d$, the expected number of pairs of multiple edges does not exceed
\begin{equation*}
  O\left(\binom{m} 2 n^{-k}\right) = O(d^2n^{2-k}).
\end{equation*}

\noindent    
(ii) The expected number of edges of $\Y$ having a vertex of multiplicity at least $3$ is at most
$$m\times \binom k3\times n\times\frac {\binom{dn-3}{d-3,d,\dots,d}}{\binom{dn}{d,\dots,d}}= m\binom k3 n\frac{(d)_3}{(dn)_3}=O(d/n).$$

\noindent    
(iii) Similarly, the expected number of edges of $\Y$ having at least two vertices of multiplicity at least $2$ is at most
$$m\times k^4\times n^2\times\frac {\binom{dn-4}{d-2,d-2,d,\dots,d}}{\binom{dn}{d,\dots,d}}= m k^4 n^2\frac{(d)_2^2}{(dn)_4}=O(d/n).$$

\noindent    
(iv) In view of (ii) and (iii), it is enough to show that the number of loops of the form $x_1x_1x_2x_3\dots x_{k-1}$ does not exceed $L$. For $i = 1, \dots, m$, let $\I_i$ be the indicator of the event that the $i$'th edge of $\Y$ is such a loop. Hence, $\lambda(\Y) = \sum_{i = 1}^m \I_i$. For every $i$ we have
  \begin{equation*}
    \E \I_i = \frac{\binom k 2 (n)_{k-1}(d)_2d^{k-2}}{(nd)_k} \sim \binom k 2 \frac{d-1}dn^{-1}.
    \label{eq:EI}
  \end{equation*}
  Therefore
  \begin{equation}\label{eq:EY}
    \E\lambda(\Y) \sim \frac {k-1} 2(d-1),
  \end{equation}
  and by Markov's inequality,
  $$\Prob(\lambda(\Y)>L)\le\frac{\E\lambda(\Y)}L=O(d^{1/2}n^{-1/2}) $$
  \end{proof}

\begin{proof}[Proof that $\prob{\lambda(\Y)>kd+\omega(n)}=o(1)$]

Let $L:=kd+\omega(n)$.
We will show that $\Var \lambda(\Y)= O( d)$, from which the desired fact follows by \eqref{eq:EY} and Chebyshev's inequality:
  $$\prob{\lambda(\Y) > L} \le \frac{\Var\lambda(\Y)  }{(L-\E \lambda(\Y) )^2} =O\left(\frac{d}{(d+\omega(n))^2}\right)=O((d+\omega(n))^{-1})=o(1).$$
 Recall that $\I_i$ is the indicator  that the $i$'th edge of $\Y$ is a loop with only one repetition, $\lambda(\Y) = \sum_{i = 1}^m \I_i$, and for every $i$ we have $\E \I_i \sim \binom k 2 \frac{d-1}dn^{-1}.$
  If $i \neq j$, then
  $$\E \I_i \I_j \le \frac{\binom k 2^2(n)_{k-1}^2(d)_2^2d^{2k-4}}{(nd)_{2k}},$$
  therefore
  \begin{multline*}
    \Cov (\I_i, \I_j) = \E \I_i\I_j - \E \I_i \E \I_j \\
    \le \frac{\binom k 2^2(n)_{k-1}^2(d)_2^2d^{2k-4}}{(nd)_{2k}(nd)_k}\left( (nd)_k - (nd-k)_k \right) = O(n^{-3}d^{-1}).
  \end{multline*}
Finally we get
\begin{multline*}
  \Var \lambda(\Y)  = \sum_{1 \le i \le m} \Var \I_i + \sum_{1 \le i \neq j \le m} \Cov(\I_i,\I_j) 
  = O(mn^{-1} + m^2n^{-3}d^{-1} ) = O(d).
\end{multline*}
  \end{proof}

\end{document}